\newtheorem{thm}{Theorem}[section]
\newtheorem{prop}[thm]{Proposition}
\newtheorem{lemma}[thm]{Lemma}
\newtheorem{cor}[thm]{Corollary}
\theoremstyle{remark}
\newtheorem{example}[thm]{Example}
\newtheorem{remark}[thm]{Remark}
\newtheorem{defin}{Definition}
\def\C{\mathbb{C}}
\def\Z{\mathbb{Z}}
\def\P{\mathbb{P}}
\def\R{\mathbb{R}}
\def\vol{{\rm vol}}
\def\Sym{{\rm Sym}}
\def\d{\partial}
\def\Fc{\mathcal{F}}
\def\Oc{\mathcal{O}}
\def\a{\alpha}
\def\b{\beta}
\def\l{\lambda}
\def\G{\Gamma}
\def\ov{\overline}
\def\w0{\overline{w_0}}
\def\PP{\Delta(P,Q)}
\def\PQ{\widehat P}
\title[Push-pull operators on convex polytopes]
{Push-pull operators on convex polytopes}
\author{Valentina Kiritchenko}
\email{vkiritch@hse.ru}
\thanks{The study has been partially funded by the Russian Academic Excellence
Project '5-100' and by RSF grant 19-11-00056 (Sections 3, 5).}
\address{Laboratory of Algebraic Geometry and Faculty of Mathematics\\
National Research University Higher School of Economics, Russian Federation\\
Usacheva str. 6, 119048 Moscow, Russia}
\address{Institute for Information Transmission Problems, Moscow, Russia}
\date{}
\keywords{push-pull operators, Newton--Okounkov polytopes, Cayley sum}
\begin{document}

\begin{abstract}A classical result of Schubert calculus  is an inductive description of
Schubert cycles using divided difference (or push-pull) operators in Chow rings.
We define convex geometric analogs of push-pull operators and describe their applications
to the theory of Newton--Okounkov convex bodies.
Convex geometric push-pull operators yield an inductive construction of
Newton--Okounkov polytopes of Bott--Samelson varieties.
In particular, we construct a Minkowski sum of Feigin--Fourier--Littelmann--Vinberg
polytopes using convex geometric push-pull operators in type $A$.
\end{abstract}

\maketitle
\section{Introduction}
Let $X$ be a smooth algebraic variety, and $E\to X$ a vector bundle of rank two on $X$.
Define the projective line fibration $Y=\mathbb P(E)$ as the variety of all lines
in $E$.
The natural projection $\pi:Y\to  X$ induces the
pull-back $\pi^*:A^*(X)\to A^*(Y)$ and the push-forward $\pi_*:A^*(Y)\to A^{*-1}(X)$ (aka {\em transfer} or {\em Gysin map}) in the (generalized) cohomology rings of $X$ and $Y$.
The {\em push-pull} operator $\pi^*\pi_*:A^*(Y)\to A^{*-1}(Y)$ is a homomorphism of $A^*(X)$-modules, and can be described explicitly via Quillen--Vishik formula
for any algebraic oriented cohomology theory $A^*$ (such as Chow ring, K-theory or algebraic cobordism).
Push-pull operators are used extensively in representation theory (Demazure operators) and in Schubert calculus (divided difference operators).
We discuss convex geometric counterparts of push-pull operators and their applications in the theory of Newton--Okounkov convex bodies and representation theory.

Convex geometric push-pull operators are motivated by the study of Newton-Okounkov polytopes of flag and Bott--Samelson varieties \cite{K18}.
Namely, if $Y=G/B$ is the complete flag variety for a connected reductive group $G$, then there is a natural projective line fibration $\pi_i:Y\to G/P_i$ for every simple root $\a_i$.
Here $P_i\subset G$ denotes the minimal parabolic subgroup associated with $\a_i$,
and $X=G/P_i$ the corresponding partial flag variety.
For instance, if $G=GL_n(\C)$ then points in $G/B$ can be identified with complete flags $(V^1\subset V^2\subset\ldots\subset V^{n-1}\subset \C^n)$ of subspaces, and the map $\pi_i$ forgets the subspace $V^i$.
The corresponding push-pull operator $\d_i:CH^*(Y)\to CH^{*-1}(Y)$ for Chow rings is often called {\em divided difference operator}, while the push-pull operator $D_i:K^*(Y)\to K^{*-1}(Y)$ for the $K$-theory is usually called {\em Demazure operator}.

A classical result of Schubert calculus \cite{BGG,De} is an inductive description of
Schubert cycles $[X_w]\in CH^*(Y)$ for all elements $w\in W$ in the Weyl group of $G$.
Namely, if $w=s_{i_1}s_{i_2}\cdots s_{i_\ell}$ is a reduced decomposition of $w$ into
the product of simple reflections, then
$$[X_w]=\d_{i_\ell}\ldots\d_{i_2}\d_{i_1}[X_{\rm id}]. \eqno(1)$$
A classical result in representation theory \cite{A} is an inductive description of the
Demazure character $\chi_w(\l)$ for every Schubert variety $X_w$ and a dominant weight
$\l$ of $G$:
$$\chi_w(\l)=D_{i_1}D_{i_2}\ldots D_{i_\ell}(e^\l). \eqno(2)$$
While formulas (1) and (2) look similar, there is no direct relation between them since
in (1) operators are applied in the order opposite to that of (2).

In \cite{Ki16}, we defined convex geometric analogs of Demazure operators.
They can be used to construct inductively polytopes $P_\l$ such that the sum of
exponentials over lattice points in $P_\l$ yields the Demazure character $\chi_w(\l)$.
Recently, Naoki Fujita showed that the Nakashima--Zelevinsky polyhedral realizations of
crystal bases for a special reduced decomposition of the longest element $w_0\in W$ can
be constructed inductively using convex geometric Demazure operators in types $A_n$,
$B_n$, $C_n$, $D_n$, and $G_2$ \cite{Fu2}.
In this setting, the convex geometric Demazure operators are applied in the
same order as in (2).

In the present paper, we define different convex geometric analogs of push-pull
operators that are more natural from the perspective of (1) (Section \ref{s.def}).
For a  reduced decomposition $w=s_{i_1}s_{i_2}\cdots s_{i_\ell}$, convex geometric
push-pull operators produce inductively polytopes whose volume polynomials coincide
with the degree polynomials of Bott--Samelson varieties corresponding to collections
of simple roots
$(\a_{i_1})$, $(\a_{i_1},\a_{i_2})$,\ldots , $(\a_{i_1},\a_{i_2},\ldots,\a_{i_\ell})$.
The main tool is the Khovanskii--Pukhlikov ring that can be associated with every
convex polytope (Section \ref{s.reminder}).
We prove an analog of the projective bundle formula for the Khovanskii--Pukhlikov rings of
push-pull polytopes (Section \ref{s.KhP}, Theorem \ref{t.main}) and describe applications in the
theory of Newton--Okounkov convex bodies and in representation theory
(Section \ref{s.applications}).

I am grateful to Evgeny Smirnov and Vladlen Timorin for useful discussions.

\section{Reminder on convex polytopes and Khovanskii--Pukhlikov rings}\label{s.reminder}
In this section, we remind the definition of the polytope ring associated with a convex
polytope $P\subset \R^n$.
This ring was originally introduced by Khovanskii and Pukhlikov in \cite{KhP} to give
a convenient functorial description of the cohomology (or Chow) rings of smooth toric
varieties.
Later, Kaveh used Khovanskii--Pukhlikov ring to give a partial description of cohomology
rings of spherical varieties \cite{Ka}.
Recently, Khovanskii--Pukhlikov rings were applied to cohomology rings of toric bundles
\cite{HKhM}
and Schubert calculus on polyhedral realizations of Demazure crystals \cite{Fu3}.

Let $\Z^n\subset \R^n$ be the integer lattice.
A convex lattice poytope $P\subset \R^n$ is a convex hull of several
points from $\Z^n$.
Recall that two convex polytopes $P$ and $P'$ are called {\em analogous} if they have
the same normal fan.
In particular, there exist linear functions $h_1$,\ldots, $h_d$ on $\R^n$ such that any
polytope $P'$ analogous to $P$ is given by inequalities:
$$h_i(x)\le H_i(P'), \ i=1,\ldots, d$$ for some constants $H_1(P')$,\ldots ,
$H_d(P')\in \R$ that depend on $P'$.
The collection of numbers $(H_1(P'),\ldots, H_d(P'))$ (called support numbers of $P'$)
defines uniquely the polytope $P'$.
If a polytope $P'$ is analogous to $P$ then there is a natural bijection between faces
of $P'$ and faces of $P$.
In the sequel, we denote by $F(P')$ the face of $P'$ that corresponds to a face
$F\subset P$ (in particular, $F=F(P)$).

Denote by $S_P$ the set of all lattice polytopes analogous to $P$.
This set can be endowed with the structure of a commutative semigroup using
{\em Minkowski sum}
$$
P_1+P_2=\{x_1+x_2\in\R^n\ |\ x_1\in P_1,\ x_2\in P_2\}.
$$
We can embed the semigroup of convex polytopes into its Grothendieck group $L_P$,
which is a lattice in $\R^d$.
The elements of $V_P$ are called {\em virtual polytopes} analogous to $P$.
They can be represented by $d$-tuples $(H_1,\ldots, H_d)$ such that
$H_i=H_i(P_1)-H_i(P_2)$ where $P_1$ and $P_2$ are analogous to $P$.
In general, the rank of $L_P$ is smaller than or equal to $d$.
The equality holds if
and only if $P$ is simple, that is, all facets of $P$ can be translated independently
without changing the combinatorial type of $P$.

There is a homogeneous polynomial $vol_P$ of degree $n$ on the lattice $L_P$, called the
{\em volume polynomial}.
It is uniquely characterized by the property that its
value $vol_P(P')$ on any convex polytope $P'\in S_P$ is equal to the volume of $P'$.

The symmetric algebra $\Sym(L_P)$ of $\Lambda_P$ can be
thought of as the ring of differential operators with constant
integer coefficients acting on $\R[L_P]$, the space of all polynomials on $L_P$.
Let $(x_1,\ldots,x_\ell)$ (where $\ell\le d$) be coordinates on $L_P$.
Denote $\frac{\d}{\d x_i}$ by $\d_{x_i}$.
\begin{defin}
The Khovanskii--Pukhlikov ring $R_P$ associated with the polytope $P$ is the quotient
ring
$$\Z[\d_1,\ldots,\d_\ell]/{\rm Ann}(vol_P).$$
Here ${\rm Ann}(vol_P)$ denotes the ideal in $R_P$ that consists of all
differential operators $D$ such that $D(vol_P)=0$.
\end{defin}

\begin{remark}
In what follows, we use that every (virtual) polytope $P'\in L_P$ defines a homogeneous
element $\d_{P'}$ of degree one in $R_P$.
Namely, let $(x_1(P'),\ldots, x_\ell(P'))$ be coordinates of $P'$.
Put $\d_{P'}=x_1(P')\d_1+\ldots+x_\ell(P')\d_\ell.$
\end{remark}

\section{Definition of push-pull polytopes}\label{s.def}
Let $P$, $Q\subset\R^n$ be convex polytopes such that $Q$ is analogous to a
{\em codimension two truncation} of $P$.
More precisely, let $\Fc=$ \{$F_1$, \ldots, $F_k$\} be a (possibly empty) collection of
codimension two faces of $P$.
Let $\PQ$ be a polytope analogous to $P$.
For every $i=1$,\ldots, $k$, choose a linear function $\psi_i(x)$ on $\R^n$ such that
$\psi_i$ takes a constant value $\Psi_i(\PQ)$ on $F_i(\PQ)$, and $\psi_i(x)< \Psi_i(\PQ)$
for all  $x\in\PQ\setminus F_i(\PQ)$.
Fix a constant $\Psi_i(Q)< \Psi_i(\PQ)$.
A {\em codimension two truncation} $Q$ of $\PQ$ is obtained from $\PQ$ by intersecting
$\PQ$ with half-spaces $\{\psi_i(x)\le \Psi_i(Q)\}$ for $i=1$,\ldots, $k$.
In particular, $Q$ has $k$ extra facets $\G_i=\{\psi_i(x)=\Psi_i(Q)\}\cap \PQ$ for
$i=1$,\ldots, $k$ (in addition to facets that come from $\PQ$).
We assume that $\Psi_i(Q)$ is sufficiently close to $\Psi_i(\PQ)$ so that $\PQ\setminus Q$
does not contain any vertices of $\PQ$ except for those in $\bigcup_{F_i\in\Fc} F_i$.

In what follows, we will need the following description of the volume polynomial of $Q$.
Let $s$ be a positive real number.
Define the nonconvex polytope $\ov Q(s)\subset\R^n$ as the union of convex polytopes
$$\bigcup_{i=1}^k \{\psi_i(x)\ge \Psi_i(\PQ)+s(\Psi_i(Q)-\Psi_i(\PQ))\}\cap \PQ.$$
In particular, $\vol (\PQ)=\vol(Q)+\vol (\ov Q(1))$, as $\PQ=Q\cup \ov Q(1)$, and
$Q\cap \ov Q(1)=\bigcup_{i=1}^k \G_i$.
In general,
$$\vol(\ov Q(s))=\left(\sum_{i=1}^{k} a_i\vol_{n-2}(F_i(\PQ))\right)
\frac{s^2}{2}+\sum_{j=3}^{n}g_j(\PQ)s^j.$$
where $a_1,\ldots,a_k\in\R$ and $g_j(\PQ)$ is a linear combination of volumes of certain
codimension $j$ faces of $\PQ$.
This implies the following formula for the volume of $Q$:
$$\vol(Q)=\vol(\PQ)-\left(\sum_{i=1}^{k} a_i\vol_{n-2}(F_i(\PQ))\right) \frac{s^2}{2}-
\sum_{j=3}^{n}g_j(\PQ)s^j.
\eqno(*)$$

\begin{defin} The {\em push-pull polytope} $\PP\subset\R^{n+1}=\R^n \times \R$
is the {\em Cayley sum} of polytopes $P$ and $Q$.
More precisely, $\PP$ is the convex hull of the following set:
$$(P\times 1) \cup (Q\times 0).$$
\end{defin}
This definition is slightly different from the usual definition of the Cayley sum of polytopes.
The reason for this modification will be seen from Examples \ref{e.FFLV0} and \ref{e.FFLV} below.

In what follows, we mostly consider not the particular polytope $\PP$ but the whole family of polytopes analogous to $\PP$.
It is easy to check that if $P'$ is analogous to $P$, then $\PP$ is analogous to $\Delta(P',P'-P+Q)$.
In particular, the space of all virtual polytopes analogous to $\PP$ depends only on the difference $Q-P$.

Below we consider motivating examples of push-pull polytopes.

\subsection{Minkowski sum with a segment}
A special codimension two truncation of $P$ can be obtained using the Minkowski sum with a segment.
Let $I\subset\R^n$ be a segment, and $P'$ a polytope analogous to $P$.
It is easy to show that the Minkowski sum $Q=P'+I$ is a codimension two truncation of a polytope $\PQ$ analogous to $P$.
Indeed, the facets of $P'+I$ are either parallel to facets of $P'$ or equal to Minkowski sums $F+I$ where $F\subset P'$ is a face of codimension two.
Then $\PP$ is a convex hull of
$$(P\times 1) \cup ((P'+I)\times0).$$

\begin{example}\label{e.FFLV0}
Let $n=2$, and $P\subset\R^2$ the triangle with the vertices $(0,0)$, $(1,0)$, $(0,1)$.
Take $I=[(0,0),(0,1)]$, and $P'=P$.
Then $Q=P+I$ is analogous to trapezoid obtained from $\PQ=2P$ by truncating the vertex $(2,0)$ (=codimension two face).
Then the push-pull polytope $\PP\subset\R^3$ is the FFLV polytope in type $A_2$ corresponding to the weight $\rho$
(see \cite{FFL} for the definition of FFLV polytopes in type $A$ and their representation-theoretic meaning).
The figure below shows $P$, $Q$ and $\PP$, respectively (from left to right).
\medskip

\includegraphics[height=3.5cm]{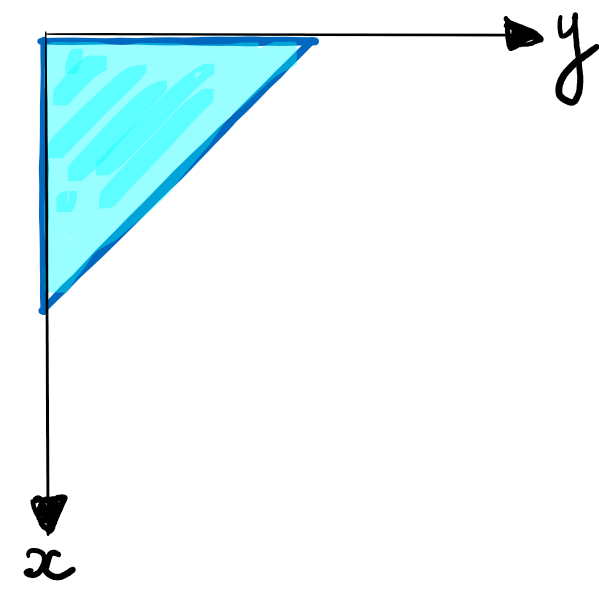} \quad
\includegraphics[height=3.4cm]{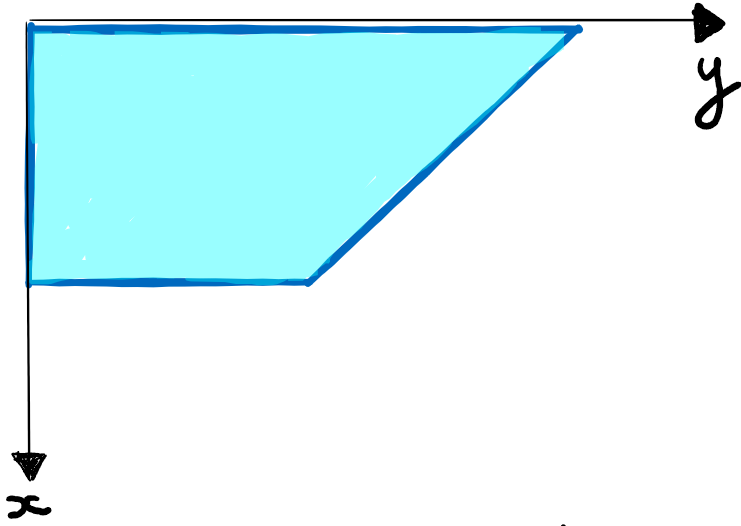} \quad
\includegraphics[height=3.5cm]{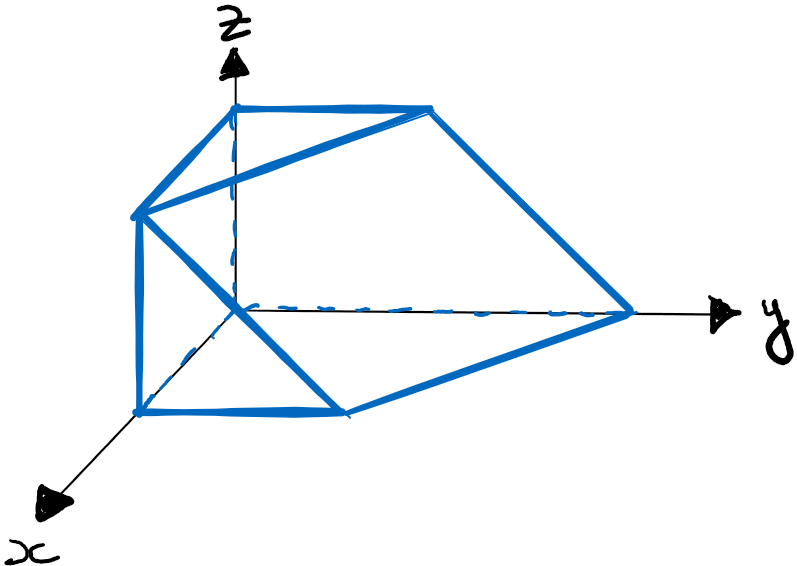}
\end{example}

\begin{example}{cf. \cite[Section 6.4]{An}} \label{e.FFLV}
Let $n=2$, and $P\subset\R^2$ the trapezoid with the vertices $(0,0)$, $(2,0)$, $(0,1)$, $(1,1)$.
Take $I=[(0,0),(0,1)]$, and $P'=P$.
Then $Q=P+I$ is analogous to a 5-gon obtained from $\PQ$ by truncating the vertex $(3,0)$.
Here $\PQ$ is the trapezoid with the vertices $(0,0)$, $(3,0)$, $(0,2)$, $(1,2)$.
Then the push-pull polytope $\PP\subset\R^3$ is the Minkowski sum of the FFLV polytope from Example \ref{e.FFLV0} and the segment $J=[(0,0,0),(1,0,0)]$.
By shrinking $J$ we get a degeneration to the FFLV polytope.
The figure below shows $P$, $Q$ and $\PP$, respectively (from left to right).
\medskip

\includegraphics[height=4.5cm]{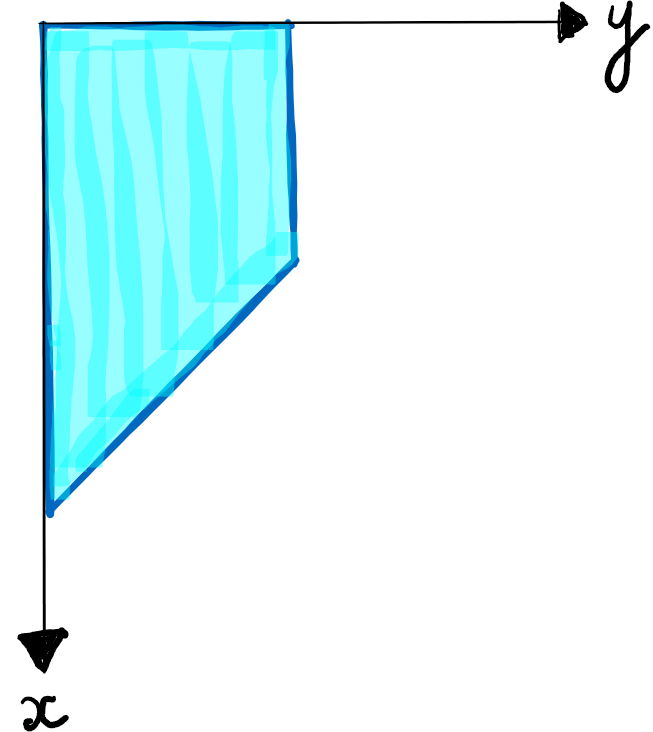} \quad \includegraphics[height=4.5cm]{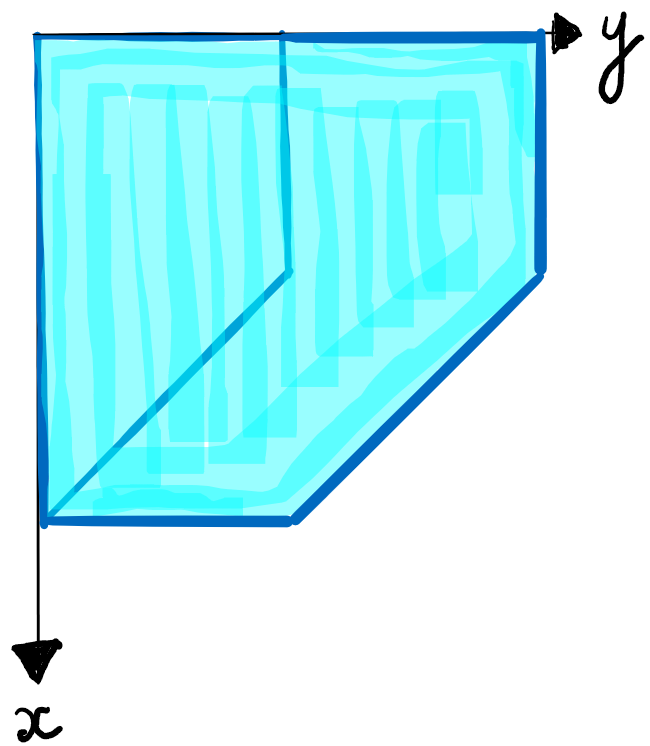} \quad
\includegraphics[height=4.5cm]{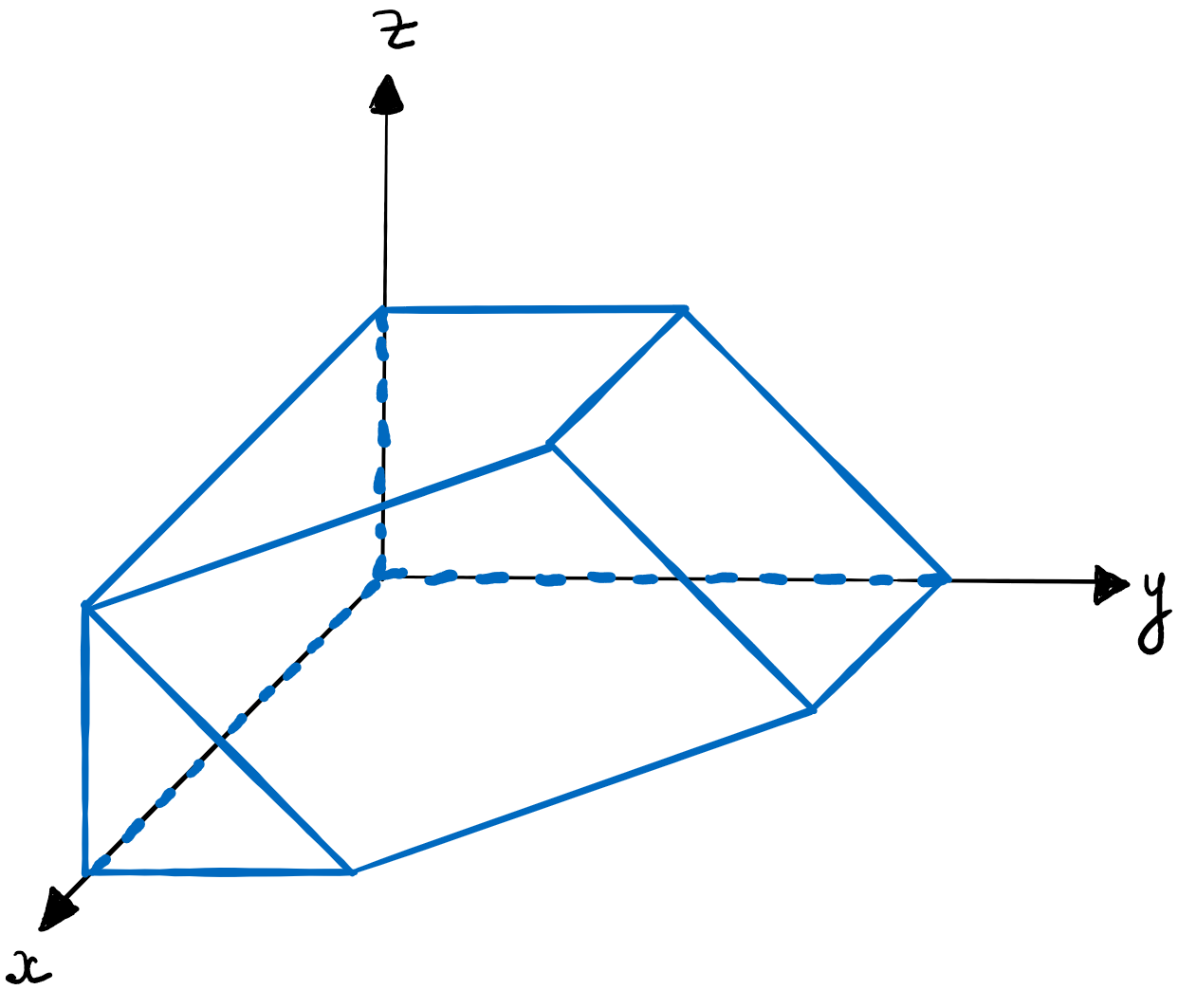}

\end{example}

\subsection{Analogous $P$ and $Q$}
When $Q=\PQ$ is analogous to $P$ (that is, no codimension two faces are truncated),
the push-pull polytope $\PP$ coincides with the Newton polytope of a projective line
fibration $Y=\mathbb P^1(E)$ over the projective toric variety $X$ corresponding to the
polytope $P$.
We assume that $E=\mathcal O\oplus \mathcal L$ is split so $Y$ is also a toric variety.
In this case, there is a simple relation between the polytope $Q$ and the line bundle $\mathcal L$.
Namely, identify the Picard group of $X$ with the Grothendieck group of lattice polytopes analogous to $P$.
Denote by $[\mathcal L]$ the  (possibly virtual) polytope corresponding to $\mathcal L$.
Then $Q=P+[\mathcal L]$.
In terms of Khovanskii-Pukhlikov rings, this can rephrased as follows.
Under the isomorphism $CH^*(X)\simeq R_P$ the first Chern class $c_1(\mathcal L)$
gets mapped to $\d_{Q-P}$.
In particular, the construction of Grossberg--Karshon cubes \cite{GK} motivated by Demazure character formula (2) for Bott--Samelson varieties can also be reproduced in the spirit of (1) using convex-geometric push-pull operators.
We discuss this approach in more detail in Section \ref{ss.GK}.

While Grossberg--Karshon cubes can be realized as Newton--Okounkov polytopes of Bott--Samelson varieties for some line bundles \cite{Fu,HY,HY17},
they do not degenerate to Newton--Okounkov polytopes of flag varieties (instead they turn into twisted cubes, which are not true polytopes).
More general push-pull operators (with codimension two truncation) allow us to produce polytopes that do not break in the limit when passing to flag varieties.
For instance, Example \ref{e.FFLV} produces the Newton--Okounkov polytope of a Bott--Samelson variety in type $A_2$ with the desired degeneration.
An example in type $A_3$ will be considered in Section \ref{ss.BS}

\section{Khovanskii--Pukhlikov  ring of a push-pull polytope}\label{s.KhP}
In this section,
we show that the Khovanskii--Pukhlikov rings of $P$ and of $\Delta:=\PP$ are related in the same manner as cohomology rings $H^*(X)$ and $H^*(Y)$ of smooth varieties $X$ and $Y$, whenever $Y$ is a projectivization of a rank two vector bundle over $X$.
We use notation of Sections \ref{s.reminder} and \ref{s.def}.

Let $R_P$ and $R_\Delta$ denote the Khovanskii--Pukhlikov rings of polytopes $P$ and $\Delta$, respectively.
More precisely, suppose that $R_P$ is defined using the lattice $L_P$, and $L_P$ is spanned by polytopes $P_1$,\ldots, $P_\ell$.
Define $L_\Delta$ as the lattice spanned by $\Delta(P+P_i,Q+P_i)$ for $i=1$,\ldots, $\ell$, and by an extra generator $C$.
The virtual polytope $C$ can be informally thought of as the cone over $Q-P$.
To give a formal definition we will need the following description of polytopes analogous to $\Delta$.

\begin{defin} Let $P'$ be a polytope analogous to $P$, and $s$ a positive real number.
Suppose that $P'-P=s_1P_1+\ldots+s_\ell P_\ell$, where all $s_i$ are nonnegative.
Define the polytope $\Delta(s,P')\subset\R^n\times \R$ as
the convex hull of
$$(P'\times s) \cup \left((P'+s(Q-P))\times 0\right).$$
\end{defin}

In particular, $\Delta(1,P)=\Delta$.
It is easy to check that the polytopes $\Delta(s,P')$ and $\Delta$ are analogous, and that the following identity holds:
$$\Delta(s,\sum_{i=1}^\ell s_iP_i)+\Delta(t,\sum_{i=1}^\ell t_iP_i)=\Delta(s+t,\sum_{i=1}^\ell (s_i+t_i)P_i).$$
Let $L_\Delta$ be the Grothendieck group of the semigroup of polytopes $\Delta(s,P')$.
Then the lattice $L_\Delta$ has a unique basis $C$, $\Delta_1$,\ldots, $\Delta_\ell$,
such that the polytope $\Delta(s,P')$ in this basis has coordinates
$(s,s_1,\ldots, s_\ell)$.
In particular, $C$ can be defined as the formal difference of polytopes $\Delta(2,P)$
and $\Delta$.

Consider the monomorphism of lattices $L_P\hookrightarrow L_\Delta$, which sends $P_i$ to $\Delta_i$ for $i=1$,\ldots, $\ell$.
In what follows, we identify $L_P$ with the sublattice of $L_\Delta$ using this
monomorphism.
Define the polynomial function $\vol_\Fc$ on $L_P$ by the condition
$$\vol_\Fc(P')=\sum_{i=1}^{k}a_i\vol_{n-2}(F_i(P'))$$
for all $P'\in L_P$ that are analogous to $P$.
Similarly, we define polynomial functions $g_j$ on $L_P$ for $j=3$,\ldots, $n$ so that
formula $(*)$ from Section
\ref{s.def} holds for all $P'\in L_P$.
Define the polynomial $q(s,P')$ on $\R\times L_P$ by the formula:
$$q(s,P')=\vol_\Fc(P')\frac{s^2}{2}+\sum_{j=3}^{n} g_j(P')s^j.$$
\begin{thm}\label{t.main}
Suppose that there exist a homogeneous element $D_{\Fc}\in R_P$ of degree two and
homogeneous elements $D_j\in R_P$ of degrees $3$,\ldots, $n$
such that
$D_{\Fc}(\vol_P)=\vol_\Fc$ and $D_j(\vol_P)=g_j$.
Put
$$c_1=\d_{\PQ}-\d_P, \quad c_2=D_{\Fc}.$$
If $q(s,P')$ satisfies the condition:
$$\left(\frac{\d^2}{\d s^2}-c_1\frac{\d}{\d s}+c_2\right)q(s,P'+sc_1)=\vol_\Fc(P'+sc_1),
\eqno(**)$$
then there is a ring isomorphism
$$R_\Delta\simeq R_P[x]/(x^2-c_1x+c_2).$$
In particular, $c_1$ and $c_2$ can be thought of as analogs of the Chern classes of a rank two vector
bundle.
\end{thm}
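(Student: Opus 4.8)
The plan is to realise $R_\Delta$ as the algebra $\Q[\d]/{\rm Ann}(\vol_\Delta)$ attached to an explicit volume polynomial, to produce the asserted map as a natural surjection from $R_P[x]/(x^2-c_1x+c_2)$, and then to upgrade surjectivity to bijectivity via Poincar\'e duality.

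I would first compute $\vol_\Delta$ by slicing the Cayley polytope. In the coordinates $(s,s_1,\dots,s_\ell)$ on $L_\Delta$ attached to the basis $C,\Delta_1,\dots,\Delta_\ell$, the horizontal slice of $\Delta(s,P')$ at height $t\in[0,s]$ is the Minkowski combination $P'+(s-t)(Q-P)$; integrating the $n$-dimensional volume over $t$ and substituting $v=s-t$ gives $\vol(\Delta(s,P'))=\int_0^s\vol_n\!\bigl(P'+v(Q-P)\bigr)\,dv$. For $v\in[0,1]$ the polytope $P'+v(Q-P)$ is exactly the polytope $P'+vc_1$ (which is analogous to $P$, as $c_1=\d_{\PQ}-\d_P$ represents the virtual polytope $\PQ-P$) truncated by the fraction $v$ of the codimension-two cuts defining $Q$; so formula $(*)$ of Section \ref{s.def} gives $\vol_n(P'+v(Q-P))=\vol_P(P'+vc_1)-q(v,P'+vc_1)$, whence
$$\vol_\Delta(s,P')=\int_0^s\bigl[\vol_P(P'+vc_1)-q(v,P'+vc_1)\bigr]\,dv.\eqno(\diamond)$$
It suffices to verify $(\diamond)$ for $s\in(0,1]$ and genuine $P'$, where all the polytopes involved are honest, since these data fill a set with nonempty interior in $L_\Delta$ and both sides are homogeneous polynomials; thus $R_\Delta=\Z[\d_C,\d_{\Delta_1},\dots,\d_{\Delta_\ell}]/{\rm Ann}(\vol_\Delta)$ with $\vol_\Delta$ as in $(\diamond)$.

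Next, sending $\d_{P_i}\mapsto\d_{\Delta_i}$ and $x\mapsto\d_C$ defines a homomorphism of graded rings $\theta\colon R_P[x]\to R_\Delta$. Indeed, if $D\in\Z[\d_{P_1},\dots,\d_{P_\ell}]$ kills $\vol_P$, then --- since $D$ has constant coefficients and so commutes with $D_\Fc$, with each $D_j$, and with translating the argument --- it also kills $\vol_\Fc=D_\Fc\vol_P$ and every $g_j=D_j\vol_P$, and applying $D$ under the integral $(\diamond)$ shows that it kills $\vol_\Delta$; thus ${\rm Ann}(\vol_P)$ maps into ${\rm Ann}(\vol_\Delta)$, and $\theta$ is surjective because $\d_C,\d_{\Delta_1},\dots,\d_{\Delta_\ell}$ generate $R_\Delta$. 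The crux is to check that $\theta$ kills $x^2-c_1x+c_2$, i.e. that $L:=\d_s^2-c_1\d_s+c_2$ annihilates $\vol_\Delta$. Writing $\vol_\Delta=V-W$ with $V=\int_0^s\vol_P(P'+vc_1)\,dv$ and $W=\int_0^s q(v,P'+vc_1)\,dv$ and using the fundamental theorem of calculus together with the identity $c_1\bigl[\vol_P(P'+sc_1)\bigr]=\d_s\bigl[\vol_P(P'+sc_1)\bigr]$, one gets $LV=\int_0^s\vol_\Fc(P'+vc_1)\,dv$ and, after the mixed terms cancel, $LW=\int_0^s(D_\Fc q)(v,P'+vc_1)\,dv$ plus the derivative of $q$ in its first argument only, evaluated at $(s,P'+sc_1)$. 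Consequently $L\vol_\Delta$ vanishes at $s=0$ (since $q$ has no term of degree $<2$ in its first variable), while its $s$-derivative, computed by the chain rule for $q(s,P'+sc_1)$ --- in which $s$ occurs in both arguments --- is exactly the difference of the two sides of condition $(**)$. Hence $(**)$ forces $L\vol_\Delta\equiv0$, and $\theta$ descends to a surjection $\bar\theta\colon R_P[x]/(x^2-c_1x+c_2)\twoheadrightarrow R_\Delta$.

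Finally I would prove $\bar\theta$ injective, working over $\Q$ (the integral statement then follows since the rings are torsion-free). The ring $R_\Delta=\Q[\d]/{\rm Ann}(\vol_\Delta)$, being attached to the single form $\vol_\Delta$ of degree $n+1=\dim\Delta$, is graded Artinian Gorenstein with one-dimensional socle in degree $n+1$ (Macaulay); likewise $R_P$ is graded Artinian Gorenstein with socle degree $n=\dim P$, and since $R_P[x]/(x^2-c_1x+c_2)$ is free of rank two over $R_P$ with basis $1,x$ and has socle $({\rm socle}\,R_P)\cdot x$, it too is Gorenstein with socle degree $n+1$. A surjection of graded Artinian Gorenstein algebras with the same socle degree is an isomorphism: a nonzero homogeneous element of its kernel of top degree $e$ would, if $e<n+1$, pair nontrivially against some element of degree $n+1-e$ by Poincar\'e duality of the source and thereby yield a nonzero kernel element of degree $n+1$, while if $e=n+1$ it would span the socle, which $\bar\theta$ maps onto the socle of $R_\Delta$ --- both impossible. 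Hence $\bar\theta$ is an isomorphism. The step I expect to be the main obstacle is the verification that $L\vol_\Delta=0$: organizing the chain rule for $q(s,P'+sc_1)$ and recognizing condition $(**)$ as precisely the first $s$-derivative of the identity one wants; by comparison, the slicing formula $(\diamond)$ and the Gorenstein bookkeeping are routine.
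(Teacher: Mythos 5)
Your proposal is correct and follows essentially the same route as the paper: the same sliced integral formula $\vol_\Delta=\int_0^s\bigl(\vol_P(P'+vc_1)-q(v,P'+vc_1)\bigr)dv$, the same homomorphism $R_P[x]/(x^2-c_1x+c_2)\to R_\Delta$ sending $x\mapsto\d_s$, and the same use of condition $(**)$ to verify that $\d_s^2-c_1\d_s+c_2$ annihilates $\vol_\Delta$. The only divergence is the final injectivity step, where you give a careful Gorenstein/Poincar\'e-duality argument while the paper merely remarks that $\d_s\notin\varphi(R_P)$; your version is the more complete one.
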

\begin{remark}
Note that if $P$ is simple, then there always exists a homogeneous element
$D_{\Fc}\in R_P$ of degree two such that $D_{\Fc} (\vol_P)=\vol_\Fc$.
Indeed, every face $F\subset P$ of codimension two is the intersection of two facets
$\G_1$ and $\G_2$.
Since $P$ is simple, every facet $\G\subset P$ defines a homogeneous element
$\d_\G\in R_P$ of degree one.
Put $D=\d_{\G_1}\d_{\G_2}$.
It is easy to check that $D(\vol_P)=a\vol_F$ for some constant $a\in\R$ that depends
on the polytope $P$.
Similarly, one can show that $D_3$,\ldots, $D_n$ also exist for simple $P$.

However, if $P$ is not simple then the existence of $D_{\Fc}$ and $D_3$,\ldots, $D_n$
can not be taken for granted.
\end{remark}
\begin{remark} Condition $(**)$ is void if $\Fc=\varnothing$.
Also, it holds trivially in the case where $g_3=\ldots=g_n=0$
(that is, $q(s,P')$ is a quadratic polynomial in $s$), and $c_1c_2=0$ in $R_P$.
This is the case in Examples \ref{e.FFLV0} and \ref{e.FFLV} for dimension reasons.

From a geometric viewpoint, condition $(**)$ is ``local'', that is, depends only on the
structure of the polytope $P$ in a sufficiently small neighborhood of the union of faces
$\bigcup_{F_i\in\Fc}F_i$.
Because of this, it is sometimes easier to verify $(**)$ than more ``global'' statements
about $P$ and the volume polynomial $\vol_P$.
\end{remark}
\begin{proof}
First, we construct a ring monomorphism $\varphi: R_P\hookrightarrow R_\Delta$.
Put $\d_{s_i}=\frac{\d}{\d s_i}$ for $i=1$,\ldots, $\ell$, and $\d_s=\frac{\d}{\d s}$.
Let $D$ be a polynomial in $\d_{s_1}$,\ldots, $\d_{s_\ell}$.
Since $L_P\subset L_\Delta$, the differential operator $D$ can also be regarded as
the polynomial $\varphi(D)$ in $\d_s$, $\d_{s_1}$,\ldots, $\d_{s_\ell}$.
It remains to check that the map $\varphi$ is well-defined, that is, $D(\vol_P)=0$
implies $\varphi(D)(\vol_{\Delta})=0$.
By definition of $\Delta(s,P')$ and formula $(*)$ for the volume of $Q$ from Section
\ref{s.def}, we have the following explicit formula for the volume polynomial
$\vol_{\Delta}$:
$$\vol_\Delta(\Delta(s,P'))=
\int_{0}^{s}\vol_P(x(t))-q(t,x(t))dt,$$
where $x(t)=P'+t(\PQ-P)$.
Since $D(\vol_P)=0$ and $\vol_\Fc=D_{\Fc}(\vol_P)$ we have that
$D(\vol_\Fc)=D_{\Fc}D(\vol_P)=0$.
Similarly, we have that $D(g_i)=D_iD(\vol_P)=0$.
Hence, $\varphi(D)(\vol_\Delta)=0$.

Second, we show that
$$(\d_s-\d_{\PQ-P})\d_s\vol_\Delta=-D_{\Fc}\vol_\Delta.$$
Using the explicit formula for $\vol_\Delta(\Delta(s,P'))$ we get
$$(\d_s-\d_{\PQ-P})\d_s\vol_\Delta=(\d_s-\d_{\PQ-P})
\left(\vol_P(x(s))-q(s,x(s))\right).$$
By definition of the directional derivative $\d_{\PQ-P}$ we have that
$(\d_s-\d_{\PQ-P})\vol_P(x(s))=0$.
Hence,
$$(\d_s-\d_{\PQ-P})
\left(\vol_P(x(s))-q(s,x(s))\right)=-(\d_s-\d_{\PQ-P})q(s,x(s)).$$
By condition $(**)$ the right hand side is equal to $-D_{\Fc}\vol_\Delta (\Delta(s,P'))$.
Indeed,
$$D_{\Fc}\vol_\Delta (\Delta(s,P'))=\int_{0}^{s}D_{\Fc}\vol_P(x(t))-D_{\Fc}q(t,x(t))dt=$$
$$=\int_{0}^{s}\vol_{\Fc}(x(t))-D_{\Fc}q(t,x(t))dt=
\int_{0}^{s}\d_t(\d_t-\d_{\PQ-P})q(t,x(t))dt.$$

Finally, we establish the isomorphism $\Phi: R_P[x]/(x^2-c_1x+c_2)\simeq R_\Delta$ by
sending $x$ to $\d_s$.
This yields a well-defined ring homomorphism
$$\Phi:g+hx\mapsto \varphi(g)+\varphi(h)\d_s,$$
since we already checked that $\d_s^2-c_1\d_s+c_2=0$ in $R_\Delta$.
Clearly, $\Phi$ is surjective since $R_\Delta$ is generated by $\d_s$ and $\varphi(R_P)$.
It is injective since $\d_s\notin\varphi(R_P)$.
\end{proof}
As a byproduct of the proof of Theorem \ref{t.main} we get the following
\begin{cor} \label{c.volume}
The volume polynomial $\vol_\Delta$ of the push-pull polytope satisfies the differential  equation:
$$F''-(\d_{\PQ}-\d_P)F'+D_{\Fc} F=0,$$
where an unknown function $F$ is a function of two variables $s\in \R$ and
$x\in L_P\otimes \R=\R^\ell$, and $F'$ is the derivative of $F$ with respect to $s$.
\end{cor}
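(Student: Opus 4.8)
The plan is to read the equation off directly from the second step of the proof of Theorem~\ref{t.main}, where the identity
$$(\d_s - \d_{\PQ - P})\,\d_s\,\vol_\Delta = -\,D_{\Fc}\,\vol_\Delta$$
was already established for $\vol_\Delta$ regarded as a polynomial function on $L_\Delta\otimes\R = \R\times(L_P\otimes\R)$. So the only real task is to recast this identity in the form claimed in the corollary.

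First I would recall the explicit formula $\vol_\Delta(\Delta(s,P')) = \int_0^s\bigl(\vol_P(x(t)) - q(t,x(t))\bigr)\,dt$ with $x(t) = P' + t(\PQ - P)$, and note that in the coordinates $(s,s_1,\dots,s_\ell)$ on $L_\Delta$ the argument $x(t)$ does not involve $s$. Hence $\d_s\vol_\Delta = \vol_P(x(s)) - q(s,x(s))$, and since $\d_{\PQ-P}\vol_P(x(s)) = \d_s\vol_P(x(s))$ by the chain rule (the defining property of the directional derivative $\d_{\PQ-P}$), the operator $\d_s - \d_{\PQ-P}$ annihilates the term $\vol_P(x(s))$. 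Applying condition $(**)$ exactly as in the proof of Theorem~\ref{t.main}, and using that $q(s,P')$ and $\d_s q(s,P')$ both vanish at $s=0$, one rewrites $(\d_s-\d_{\PQ-P})q(s,x(s))$ as $D_{\Fc}\vol_\Delta$, which gives the displayed identity above.

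Finally I would convert it to the stated shape. The operators $\d_s$, $\d_P$, $\d_{\PQ}$ have constant coefficients, hence commute pairwise, and $P'\mapsto\d_{P'}$ is linear, so $\d_{\PQ-P} = \d_{\PQ} - \d_P$ and $\d_{\PQ-P}\d_s\vol_\Delta = (\d_{\PQ}-\d_P)\vol_\Delta'$. Putting $F = \vol_\Delta$ and $F' = \d_s F$ turns the identity into $F'' - (\d_{\PQ} - \d_P)F' + D_{\Fc}F = 0$, which is the claim. There is no serious obstacle here: every computational step is already carried out in the proof of Theorem~\ref{t.main}, and the corollary merely records it as an assertion about the single function $\vol_\Delta$ rather than as a relation in the ring $R_\Delta$; the one point deserving a sentence is the commutation-and-linearity remark that rewrites $(\d_s-\d_{\PQ-P})\d_s F$ as $F'' - (\d_{\PQ}-\d_P)F'$.
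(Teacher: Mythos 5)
Your proposal is correct and matches the paper exactly: the paper states this corollary as a byproduct of the proof of Theorem \ref{t.main} with no separate argument, relying precisely on the identity $(\d_s-\d_{\PQ-P})\d_s\vol_\Delta=-D_{\Fc}\vol_\Delta$ established in the second step of that proof. Your rewriting via linearity of $P'\mapsto\d_{P'}$ (so that $\d_{\PQ-P}=\d_{\PQ}-\d_P$) and commutativity of the constant-coefficient operators is exactly the intended translation into the stated differential equation.
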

\begin{example}\label{e.R_121}
We continue Example \ref{e.FFLV}.
A polytope $P'$ is analogous to $P$ if $P'$ is given by inequalities
$$0\le (x-x_0); \quad 0\le (y-y_0)\le b; \quad (x-x_0)+(y-y_0)\le a+b.$$
Choose a basis $P_1$, $P_2$, $P_3$, $P_4$ in $L_P$ so that $P'$ has coordinates $(a,b,x_0,y_0)$ in this basis.
In particular, $\d_{s_1}=\frac{\d}{\d a}:=\d_a$ and $\d_{s_2}=\frac{\d}{\d b}:=\d_b$.
Since the area of $P'$ does not change under parallel translations we have that
$\d_{s_3}\vol_P=\d_{s_4}\vol_P=0$.
In other words, the volume polynomial $\vol_P=ab+\frac{b^2}{2}$ does not depend on $x_0$ and $y_0$.
It is easy to check that the differential operators $(\d_b^2-\d_a\d_b)$ and $\d_a^2$ annihilate $\vol_P$.
Hence, the Khovanskii--Pukhlikov ring $R_P$ is isomorphic to the quotient of the polynomial ring $\Z[\d_a,\d_b]$ by the ideal $(\d_a^2, \d_b^2-\d_a\d_b)$.

We now compute $c_1$ and $c_2$.
We have that $\PQ-P=P_2$, hence, $\d_{\PQ-P}=\d_b$.
Since $\Fc$ consists of a single vertex of $\PQ$, and we may choose any $D_{\Fc}$ such that
$D_{\Fc}\vol_P=1$.
For instance, take $D_{\Fc}=\d_a\d_b$.
Hence, the statement of Theorem \ref{t.main} reduces to the following isomorphisms:
$$R_\Delta\simeq R_P[x]/(x^2-\d_b x+\d_a\d_b)\simeq \Z[\d_s,\d_a,\d_b]/(\d_a^2, \d_b^2-\d_a\d_b, \d_s^2-\d_b\d_s+\d_a\d_b),$$
which can be easily checked by direct calculation.
Indeed, the volume polynomial $\vol_\Delta$ is equal to $s(ab+\frac{b^2}{2})+\frac{s^2}{2}(a+b)$.
Clearly, the differential operator $\d_s^2-\d_b\d_s+\d_a\d_b$ annihilates $\vol_\Delta$.

\end{example}

\section{Applications to Schubert calculus and representation theory}\label{s.applications}

\subsection{Formula for the Schubert cycles and Demazure character formula} \label{ss.GK}
We now explain a simple convex geometric relation between formula (1) (for the Schubert cycles) and formula (2) (for the Demazure characters).
Namely, we construct Grossberg--Karshon cubes for a given reduced sequence $(\a_{i_1},\ldots, \a_{i_\ell})$ of simple roots by two different methods.
Both constructions are inductive but the first method uses terminal subwords of the word $I=(i_1, i_2,\ldots, i_\ell)$ while the second method uses initial subwords of $I$.

The first method is the same as in \cite{GK}, and can also be described within a more general framework of convex geometric Demazure operators \cite[Section 4]{Ki16}.
Namely, we consider a (possibly virtual) polytope $P\subset\R^n$ obtained as $$P=D_{i_1}D_{i_2}\ldots D_{i_\ell}(p),$$
where $p\in\R^n$ and $D_1$,\ldots, $D_\ell$ are convex geometric Demazure operators for $G$ (see \cite[Section 4.3]{Ki16} for more details).
In this case, the intermediate polytopes $D_{\ell}(p)$, $D_{\ell-1}D_{\ell}(p)$,\ldots , and $D_2\cdots D_{\ell-1}D_{\ell}(p)$ are (possibly twisted) Grossberg-Karshon cubes for terminal subwords $(i_\ell)$, $(i_{\ell-1},i_\ell)$,\ldots , $(i_2,\ldots , i_{\ell-1}, i_{\ell})$, respectively.

The second method uses push-pull polytopes.
Recall that the Bott-Samelson variety $R_I$ can be constructed as a tower of projective line fibrations:
$$
\{{\rm pt}\}=R_\varnothing\leftarrow R_{(i_1)}\leftarrow R_{(i_1,i_2)}\leftarrow\ldots\leftarrow R_{(i_1,i_2,\ldots , i_{\ell-1})}\leftarrow R_I.$$
This tower is used in one of the proofs of formula (1) (see \cite{De} or \cite[Theorem 3.6.18]{Ma}).
Every variety in this tower is obtained form the previous variety as the projectivization of a rank two vector bundle.
Using push-pull polytopes we will turn a tower of Bott--Samelson varieties into a tower of convex polytopes:
$$\{p\}=P_{\varnothing}\leftarrow P_{(i_1)}\leftarrow P_{(i_1,i_2)}\leftarrow\ldots\leftarrow P_{(i_1,i_2,\ldots ,i_{\ell-1})}\leftarrow P_I.$$
We show that the intermediate polytopes in this tower can be identified with suitable Grossberg-Karshon cubes for initial subwords $(i_1)$, $(i_1,i_2)$,\ldots , and $(i_1,i_2,\ldots , i_{\ell-1})$, respectively.
In particular, we will show that the resulting polytope $P_I$
is analogous to $P$.
This observation exhibits a relation beween formulas (1) and (2).

We now describe construction of $P_I$ in more detail.
We first construct $P_I$ in convex geometric terms and then explain the relation to Bott--Samelson varieties and Grossberg--Karshon cubes.
Let ($\b_1$,\ldots, $\b_\ell$) be a collection of vectors in $\R^\ell$ (it is possible that $\b_i=\b_j$ for $i\ne j$).
Fix a positive definite inner product $\langle\cdot,\cdot\rangle$ on $\R^{\ell}$, and define the linear function $(\cdot,\b_i)$ on $\R^{\ell}$ by the formula:
$$(\a,\b_i)=2\frac{\langle\a,\b_i\rangle}{\langle \b_i,\b_i\rangle}.$$
Denote by $s_{\b_i}$ the reflection through the hyperplane orthogonal to $\b_i$, i.e.,
$$s_{\b_i}:\R^{\ell}\to\R^{\ell}; \quad s_{\b_i}:\a\mapsto \a-(\a,\b_i)\b_i.$$
Let $(x_1,\ldots,x_\ell)$ be coordinates on $\R^\ell$.
Define $(\ell-1)$ linear functions on $\R^\ell$:
$$f_1=(\b_1,\b_2)x_1; \quad f_2=(\b_1,\b_3)x_1+(\b_2,\b_3)x_2; \ \ldots \ ;$$
$$f_{\ell-1}=(\b_1, \b_\ell)x_1+(\b_2,\b_\ell)x_2+\ldots+(\b_{\ell-1},\b_\ell)x_{\ell-1}.$$
In particular, $f_i$ depends only on $x_1$,\ldots, $x_i$.
We now associate with a vector $\l\in\R^n$ the polytope $P_I(\l)$ defined by inequalities (cf. \cite[(2.21)]{GK}):
$$0\le x_1\le (\l,\b_1); \quad 0\le x_2\le -f_1(x_1)+(\l,\b_2);
\quad 0\le x_3\le -f_2(x_1,x_2)+(\l,\b_3);\ \ldots  \ ;$$
$$0\le x_\ell\le -f_{\ell-1}(x_1,\ldots,x_{\ell-1})+(\l,\b_{\ell}).$$
The polytope $P_I(\l)$ is a combinatorial cube and can be thought of as a (possibly virtual) multidimensional version of a trapezoid.
There is a unique vertex $p_I(\l)$ such that all coordinates of $p_I(\l)$ are nonzero (for generic $\l$).
It is easy to find these coordinates:
$$p_I(\l)=((\l,\b_1), (\l,s_{\b_1}(\b_2)), \ldots, (\l,s_{\b_1}\cdots s_{\b_{\ell-1}}(\b_\ell))). \eqno(3)$$
Note that $p_I(\l)=p(\mu)$ if and only if $P_I(\l)=P_I(\mu)$.

The polytope $P_I(\l)$ can be constructed inductively using push-pull polytopes.
The induction step is based on the following
\begin{lemma}\label{l.Dem} Let $I^1$ denote the sequence $(\b_2,\ldots, \b_\ell)$.
If $P_{I^1}(\l)$ and $P_{I^1}(\l-\b_1)$ are true (not virtual) polytopes then
$P_I(\l)$ is analogous to the push-pull polytope $\Delta(P_{I^1}(\l-\b_1),P_{I^1}(\l))$.
\end{lemma}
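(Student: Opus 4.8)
The plan is to show that $P_I(\lambda)$ and the push-pull polytope $\Delta:=\Delta(P_{I^1}(\lambda-\beta_1),P_{I^1}(\lambda))$ are cut out by the same system of linear inequalities apart from the value of a single support number, and then to use the fact that translating one facet parallel to itself moves a polytope within the space of virtual polytopes analogous to it. The first step is a slice description of $P_I(\lambda)$ along its first coordinate: substituting $x_1=t$ into the inequalities defining $P_I(\lambda)$ and using $(\lambda,\beta_j)-(\beta_1,\beta_j)t=(\lambda-t\beta_1,\beta_j)$, while observing that the remaining linear terms $-(\beta_i,\beta_j)x_i$ with $2\le i<j$ are precisely the linear forms occurring in the definition of $P_{I^1}(\mu)$, one gets $\{x_1=t\}\cap P_I(\lambda)=\{t\}\times P_{I^1}(\lambda-t\beta_1)$ for every $t\in[0,(\lambda,\beta_1)]$.

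The second step identifies $\Delta$ in the same terms. Since $P_{I^1}(\lambda)$ is analogous to $P_{I^1}(\lambda-\beta_1)$ --- for generic $\lambda$ both are combinatorial cubes with a common normal fan --- the collection $\Fc$ of truncated codimension two faces is empty, so $\Delta$ is the Cayley sum of two analogous polytopes, that is, a combinatorial prism over $P_{I^1}(\lambda)$. Its slice at height $u\in[0,1]$ is the Minkowski combination $(1-u)P_{I^1}(\lambda)+uP_{I^1}(\lambda-\beta_1)$; because the support numbers of $P_{I^1}(\mu)$ --- the lower ones equal to $0$, the upper ones the linear forms $(\mu,\beta_j)$ --- depend linearly on $\mu$, this slice equals $P_{I^1}(\lambda-u\beta_1)$, and since the fibres vary affinely the union of the slices is already convex, hence equals $\Delta$. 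Writing the facet inequalities out, $\Delta=\{0\le x_1\le1\}\cap\{0\le x_j\le -f_{j-1}(x_1,\dots,x_{j-1})+(\lambda,\beta_j):\ 2\le j\le\ell\}$, which is exactly the defining system of $P_I(\lambda)$ with the single inequality $x_1\le(\lambda,\beta_1)$ replaced by $x_1\le1$.

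It then follows that $P_I(\lambda)$ is obtained from the genuine polytope $\Delta$ by translating the facet $\{x_1=\const\}$ parallel to itself, so $P_I(\lambda)$ is a (possibly virtual) polytope analogous to $\Delta$, which is the assertion of the lemma. The step I expect to need the most care is the passage from the Cayley-sum picture to the explicit inequality system --- in particular checking that $\Fc=\varnothing$ and that the prism has exactly the $2\ell$ facets listed with no redundancy --- together with keeping track of the fact that when $(\lambda,\beta_1)>1$ the intermediate slices $P_{I^1}(\lambda-t\beta_1)$ for $t>1$ need not be honest polytopes, so that $P_I(\lambda)$ itself may be only virtual; this causes no problem, since the relation of being analogous is defined on the level of virtual polytopes.
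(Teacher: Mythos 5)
Your proof is correct and follows essentially the same route as the paper's: both identify the $x_1$-slices of $P_I(\lambda)$ with $P_{I^1}(\lambda-t\beta_1)$ (the paper does this only for the two extreme facets $x_1=0$ and $x_1=(\lambda,\beta_1)$, using the vertex formula and the invariance $(\lambda,\mu)=(s_{\beta_1}\lambda,s_{\beta_1}\mu)$ where you use bilinearity of $(\cdot,\beta_j)$ directly) and then match these slices with those of the Cayley sum $\Delta(P_{I^1}(\lambda-\beta_1),P_{I^1}(\lambda))$ up to rescaling the $x_1$-axis. The paper packages that final rescaling via its family $\Delta(s,P')$ of polytopes analogous to $\Delta$ rather than via your ``translate one facet'' step, but the content, including the caveat about $(\lambda,\beta_1)>0$ and virtuality, is the same.
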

\begin{proof}
Consider two parallel facets $\G_0:=P_I(\l)\cap \{x_1=0\}$ and $\G_1:=P_I(\l)\cap\{x_1=(\l,\b_1)\}$ of $P_I(\l)$.
Then $P_I(\l)=\Delta(\G_1,\G_0)$.
By construction $\G_0=P_{I^1}(\l)\subset\R^{\ell-1}:=\R^{\ell}\cap \{x_1=0\}$.
It is easy to check that if we identify $\G_1$ with its projection to $\R^{\ell-1}$ along the $x_1$-axis, then $\G_1=P_{I^1}(s_{\b_1}(\l))$.
Indeed, the vertex $p_I(\l)$ of $\G_1$ gets projected to the point
$$((\l,s_{\b_1}(\b_2)), \ldots, (\l,s_{\b_1}\cdots s_{\b_{\ell-1}}(\b_\ell)))\in\R^{\ell-1}.$$
It remains to use that $(\l,\mu)=(s_{\beta_1}(\l),s_{\beta_1}(\mu))$ for all $\l$, $\mu\in\R^{\ell}$.

Since $s_{\beta_1}(\l)=\l-(\l,\b_1)\b_1$, we have $\G_1=\G_0-(\l,\b_1)P_{I^1}(\b_1)$.
By construction $P_I(\l)$ is the convex hull of $\G_0$ and $\G_1$, hence, $P_I(\l)$ is analogous to $\Delta(P_{I^1}(\l-\b_1),P_{I^1}(\l))$.
Here we use that $(\l,\b_1)>0$ since $P_I(\l)$ is not virtual.
\end{proof}

We now consider the case of Bott--Samelson varieties.
Let $(\b_1,\ldots,\b_\ell)=(\a_{i_\ell},\ldots,\a_{i_1})$, and $\l$ a weight of $G$.
In what follows, $L_\l$ denotes the line bundle on $G/B$ associated with a weight $\l$,
and $p_I:R_I\to G/B$ denotes the projection of the Bott--Samelson variety to
the flag variety.
By construction $R_{I}=\P(E)$ where $E$ is a rank two vector bundle over $R_{I^\ell}$.
The bundle $E$ can be chosen so that there is a short exact sequence
$0\to \Oc_{R_{I^\ell}}\to E\to p_{I^\ell}^*L_{\a_{i_\ell}}\to 0$.
In particular, $c_1(E)=c_1(p_{I^\ell}^*L_{\a_{i_\ell}})$ and $c_2(E)=0$.
Hence, if the Khovanskii--Pukhlikov ring $R_{P_{I^\ell}}$ is isomorphic
to $CH^*(R_{I^\ell})$ then Theorem \ref{t.main} together with Lemma \ref{l.Dem}
imply the ring isomorphism $R_{P_{I}}\simeq H^*(R_I)$.

Note that in order to apply Lemma \ref{l.Dem} we might have to choose a true polytope
$P_{I^1}$ analogous to $P_{I^1}(\l)$ so that $P_{I^1}-P_{I^1}(\beta_1)$ is also
true.
In particular, $P_I(\l)$ itself is often a virtual
(twisted) polytope.
In other words, we replace the (not very ample) line bundle $p_I^*L_\l$ on $R_I$ by a very
ample line bundle $L$ such that $L\otimes L(-\beta_i)$ is also very ample.

Hence, the inductive construction of the polytope $P_I$ via a sequence of push-pull
polytopes repeats the inductive construction of the Bott--Samelson variety $R_I$.
On the other hand, the definition of $P_I(\l)$  by explicit inequalities together
with calculations in \cite[3.6]{GK} exhibit $P_I(\l)$ as the Karshon--Grossberg cube
for $R_I$ and $\l$.

\begin{remark} In the case of Bott--Samelson varieities, coordinates of $p_I(\l)$ computed in (3) are exactly the coefficients in the Chevalley--Pieri formula in the Chow ring $CH^*(R_I)$.
Recall that the Picard group of $R_I$ is spanned by the classes of hypersurfaces $R_{I^j}$ where $I^j$ denotes the sequence $(i_1,\ldots,i_{j-1},i_{j+1},\ldots,i_{\ell})$.
The Chevalley--Pieri formula for Bott--Samelson varieties yields the following decomposition of $c_1(p_I^*L_\l)$ in the basis $([R_{I^j}])_{1\le j\le \ell}$:
$$c_1(p_I^*L_\l)=\sum_{j=1}^\ell (\l,s_{\b_1}\cdots s_{\b_{j-1}}(\b_j))[R_{I^j}].$$
In particular, the induction step of Lemma \ref{l.Dem} in this setting can be viewed as a convex geometric counterpart of \cite[Proposition 2.1]{De}.

On the other hand, there is a simple convex geometric analog of Chevalley--Pieri formula in the Khovanskii--Pukhlikov ring of $P_I$.
Let $\G_j$ denote the facet of $P_I$ given by equation $x_j=0$.
The polytope $P_I$ is simple so $\G_j$ defines an element $\d_{\G_j}\in R_{P_I}$.
Then (3) implies immediately the following identity:
$$\d_{P_I(\l)}= \sum_{j=1}^\ell (\l,s_{\b_1}\cdots s_{\b_{j-1}}(\b_j))\d_{\G_j}.$$
\end{remark}

\subsection{Minkowski sums of FFLV polytopes} \label{ss.BS}
Construction of convex geometric push-pull operators is mainly motivated by study of Newton--Okounkov convex bodies of Bott-Samelson varieties.
Namely, polytopes that arise as conjectural Newton--Okounkov polytopes can sometimes be also constructed by iterated application of push-pull operators.
In such cases, properties of Khovanskii--Pukhlikov rings of push-pull polytopes (Theorem \ref{t.main}) allow us to prove that we indeed constructed the full Newton--Okounkov polytope (the same kind of argument was used in Section \ref{ss.GK}).
Below we illustrate this method in the case of Bott--Samelson variety $R_{(1,2,1,3,2)}$ in type $A_3$.
This is a partial case of Bott--Samelson varieties  in type $A_n$ considered in \cite{K18}.

In future, we plan to use the same method to complete \cite{K18}, and prove directly that Minkowski sums of FFLV polytopes yield Newton--Okounkov polytopes of Bott--Samelson varieties in type $A_n$.
In \cite{K18}, we used convex geometric Demazure operators in order to compare volumes, however, this was quite a roundabout way since FFLV polytopes themselves can not be constructed using Demazure operators.
On the other hand, push-pull operators seem to be a natural tool to construct FFLV polytopes and their Minkowski sums.

From a geometric viewpoint, points of the Bott--Samelson variety $R_5:=R_{(1,2,1,3,2)}$ can be identified with configurations $(a_1, l_1, a_2, \Pi, l_2)$ of subspaces in $\P^3$ such that $a_1$, $a_2$ are points, $l_1$, $l_2$ are lines, $\Pi$  is a plane, and
$$ a_1\in l_0, l_1; \ a_2\in l_1\subset \Pi_0; \ a_2\in l_2\subset \Pi,$$
where $l_0\subset \Pi_0$ are fixed line and plane in $\P^3$.
The intermediate Bott--Samelson varieties $R_1:=R_{(1)}$, $R_2:=R_{(1,2)}$, $R_3:=R_{(1,2,1)}$, $R_4:=R_{(1,2,1,3)}$  can be identified with the subvarieties of $R_5$, namely, define $R_4\subset R_5$ as the hypersurface given by the condition $\{l_2=l_1\}$.
Similarly, define $R_3\subset R_4$, $R_2\subset R_3$, and $R_1\subset R_2$ by the conditions $\Pi=\Pi_0$, $a_2=a_1$, and $l=l_0$, respectively.
Clearly, $R_i$ is the projectivization of a quotient tautological rank two bundle $E_{i-1}$ over $R_{i-1}$ for $i=1$,\ldots, $i=5$.
For instance, $R_5=\P(E_4)$ where the fiber of $E_4$ over a point $(a_1, l_1, a_2, \Pi, l_2)$ is the quotient $V(\Pi)/V(a_2)$ (by $V(P)$ we denote the vector subspace whose projectivization is the projective subspace $P$).
In particular, the Chow ring of $R_5$ is generated by the first Chern clasess
$\xi_1$,\ldots, $\xi_5$ of line bundles $\Oc_{E_1}(1)$,\ldots, $\Oc_{E_5}(1)$.
Note also that $R_1\simeq \P^1$, and $R_2$ is the blow up of $\P^2$ at one point.

We now construct a sequence of polytopes $\Delta_0:=\{\rm 0 \}$, $\Delta_1:=\Delta_{(1)}$, $\Delta_2:=\Delta_{(1,2)}$, $\Delta_3:=\Delta_{(1,2,1)}$, $\Delta_4:=\Delta_{(1,2,1,3)}$, $\Delta_5:=\Delta_{(1,2,1,3,2)}$ so that every polytope in the sequence is obtained as a push-pull polytope from the previous one.
Using the sequence of push-pull polytopes $\Delta_1$,\ldots, $\Delta_5$ and Theorem 4.1
we show that the Khovanskii--Pukhlikov ring of $\Delta_5$ is isomorphic to the Chow ring
of $R_5$.
More precisely, we construct families of analogous polytopes at each step.
In particular, we consider the family of segments
$\Delta_1(a)=\{u_1\in \R \ | \ 0\le u_1 \le a\}$ and
the family of trapezoids
$\Delta_2(a)=\{(u_1,u_2)\in \R^2 \ | \ 0\le u_1 \le a; \ 0\le u_2; \ u_1+u_2\le a+b \}$.

To construct $\Delta_3$ first note that the Chow ring of $R_3$ is isomorphic to
$$\Z[\xi_1,\xi_2,\xi_3]/(\xi_1^2, \ \xi_2^2-\xi_1\xi_2, \ \xi_3^2+\xi_2\xi_3+\xi_2^2)$$
(this is easy to compute by applying repeatedly projective bundle formula).
Hence, $CH^*(R_3)$ is isomorphic
to the Khovanskii--Pukhlikov ring of the push-pull polytope
$\Delta_3(a,b,c)\subset \R^3$ given by inequalities
$$0\le u_1\le a+b; \ 0\le u_3 \le c; \ 0\le u_2; \ u_2+u_3 \le b+c;
\ u_1+u_2+u_3 \le a+b+c.$$
The ring isomorphism takes $\xi_1$ to $\d_a$, $\xi_2$ to $\d_b$ and $\xi_3$ to
$\d_c-\d_b$.
Indeed, if we put $s=c$ in Example \ref{e.R_121} we get $\Delta_3(a,b,c)$.

It remains to construct $\Delta_4$ from $\Delta_3$ and $\Delta_5$ from $\Delta_4$.
Again by projective bundle formula we have the ring isomorphism
$$CH^*(R_5)\simeq CH^*(R_3)[\xi_4,\xi_5]/(\xi_4^2-\xi_2\xi_4, \
\xi_5^2-\xi_5(\xi_3+\xi_4-\xi_2)+\xi_3(\xi_4-\xi_2)).$$
Hence, if we take $\Delta_4=\Delta(P,Q)$, where $P=\Delta_3(a,b,c)$ and
$Q=\PQ=\Delta_3(a,b+1,c))$ we get a polytope whose Khovanskii--Pukhlikov ring
is isomorphic to $CH^*(R_4)$.
The ring isomorphism sends $\xi_4$ to $\d_d$.
There is a family of polytopes $\Delta_4(a,b,c,d)$ analogous to $\Delta_4$ defined
by inequalities:
$$0\le u_1, u_2, u_3, u_4; \ u_3 \le c; \  u_4\le d; \ u_1+u_4\le a+b+d; $$
$$\ u_2+u_3+u_4 \le b+c+d;  \  u_1+u_2+u_3+u_4 \le a+b+c+d.$$

To construct $\Delta_5$ as a push-pull polytope over $\Delta_4$ we first rewrite the
relation
$\xi_5^2-\xi_5(\xi_3+\xi_4-\xi_2)+\xi_3(\xi_4-\xi_2))$ (modulo the other relations) in
$CH^*(R_5)$ making the change of variable $x=\xi_4+\xi_5$.
We get the ring isomorphism
$$CH^*(R_5)=R_{\Delta_4}[x]/(x^2-(\d_c+\d_d)x+(\d_c^2+\d_d^2)).$$
This gives us a hint on how to choose $\PQ$ and $\Fc$ in order to construct $\Delta_5$ as
$\Delta(P,Q)$ for $P=\Delta_4(a,b,c,d)$.
Namely, $\PQ=\Delta(a,b,c+1,d+1)$, and
$$\Fc=\{\G_{124}\cap\G^0_4, \
\G_{234}\cap\G^0_3, \ \G_{234}\cap\G^0_4, \ \G_{1234}\cap\G^0_3, \
\G_{1234}\cap\G^0_4\},$$
 where
$\G^0_{i}$ denotes the facet $\{u_i=0\}$, and $\G_{124}$, $\G_{234}$ and $\G_{1234}$
denote the facets $\{u_1+u_2+u_4=a+b+d\}$, $\{u_2+u_3+u_4=b+c+d\}$ and
$\{u_1+u_2+u_3+u_4=a+b+c+d\}$, respectively.
It is not hard to check that $\d_{\PQ-P}=\d_c+\d_d$ and $D_{\Fc}=\d_c^2+\d_d^2$.
Applying Theorem \ref{t.main} we get that the ring $CH^*(R_5)$ is isomorphic to the
Pukhlikov--Khovanskii ring of the polytope $\Delta_5(a,b,c,d,e)\subset\R^5$ given by
$16$ inequalities:
$$u_1,u_2,u_3,u_4,u_5\ge0;\   u_1\le a+b+d; \ u_5\le e; \ u_3+u_5\le c+e; \  u_4+u_5\le d+e; $$
$$\ u_1+u_4+u_5\le a+b+d+e; \ u_2+u_3+u_5\le b+c+d+e; \ u_2+u_4+u_5\le b+c+d+e; $$
$$u_1+u_2+u_3+u_5\le a+b+c+d+e; \ u_1+u_2+u_4+u_5\le a+b+c+d+e; $$
$$u_2+u_3+u_4+2u_5\le b+c+d+2e; \ u_1+u_2+u_3+u_4+2u_5\le a+b+c+d+2e.$$
The isomorphism takes $\xi_5$ to $\d_e-\d_d$.

\def\rddots{\cdot^{\cdot^{\cdot}}}

It is easy to check that the polytope $\Delta_5$ is equal to the Minkowski sum of
FFLV polytopes $P_1(0,a)$, $P_2(0,b,b+c)$, $P_3(0,d,d+e,d+e)$ where the FFLV polytope
$P_{n-1}(\l_1,\ldots,\l_n)\subset \R^{\frac{n(n-1)}{2}}$ is defined by the inequalities
$u_k\ge 0$ and
$$\sum_{k\in D}u_k\le \l_j-\l_i$$
for all Dyck paths $D$ going from $\l_i$ to $\l_j$ in table $(FFLV)$  where $1\le i<j\le n$:
$$
\begin{array}{cccccccccc}
\ldots&       & \l_4    &       &\l_3   &       &\l_2   &    &\l_1 \\
      &\ldots &         &u_6    &       & u_3   &       &u_1 &     \\
      &       &\ldots   &       &u_5    &       & u_2   &    &     \\
      &       &         &\ldots &       & u_4   &       &    &     \\
      &       &         &       &\rddots &       &       &    &     \\
\end{array}.
\eqno{(FFLV)}$$
As a corollary, we get the following
\begin{prop}
The volume polynomial of the Minkowski sum $P_1(0,a)+P_2(0,b,b+c)+ P_2(0,d,d+e,d+e)$ of
FFLV polytopes  (considered as a polynomial in $a$, $b$, $c$, $d$ and $e$) is equal to
the self-intersection index of the divisor $a\xi_1+(b+c)\xi_2+c\xi_3+(d+e)\xi_4+e\xi_5$
on $R_{12132}$.
\end{prop}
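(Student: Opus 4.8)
The plan is to combine two facts already obtained in Section~\ref{ss.BS}: iterating Theorem~\ref{t.main} along the tower $\Delta_1,\ldots,\Delta_5$ yields a ring isomorphism $R_{\Delta_5}\xrightarrow{\sim}CH^*(R_5)$, given on generators by $\partial_a\mapsto\xi_1$, $\partial_b\mapsto\xi_2$, $\partial_c\mapsto\xi_2+\xi_3$, $\partial_d\mapsto\xi_4$, $\partial_e\mapsto\xi_4+\xi_5$; and the polytope $\Delta_5(a,b,c,d,e)$ is exactly the Minkowski sum $P_1(0,a)+P_2(0,b,b+c)+P_3(0,d,d+e,d+e)$ of FFLV polytopes. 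The first step is to identify the relevant class: by the Remark in Section~\ref{s.reminder}, $\Delta_5(a,b,c,d,e)$ has coordinates $(a,b,c,d,e)$ in the chosen basis of $L_{\Delta_5}$, hence defines the degree-one element $\partial_{\Delta_5(a,b,c,d,e)}=a\,\partial_a+b\,\partial_b+c\,\partial_c+d\,\partial_d+e\,\partial_e$ of $R_{\Delta_5}$; under the isomorphism this becomes $a\xi_1+(b+c)\xi_2+c\xi_3+(d+e)\xi_4+e\xi_5$, i.e.\ the first Chern class of the line bundle of the divisor $D$ in the statement.

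The substance of the proof is to upgrade the ring isomorphism to one compatible with the natural top pairings: the degree-$5$ part $(R_{\Delta_5})^{5}$ is carried onto $CH^{5}(R_5)$, and the functional $\Theta\mapsto\Theta(\vol_{\Delta_5})$ corresponds to the degree map of $CH^*(R_5)$ (up to the conventional factor $5!$ relating lattice volume to top self-intersection numbers). This does not follow from the ring structure alone, since an isomorphism of Poincar\'e duality algebras identifies the one-dimensional socles only up to a scalar; I would get it by induction along the tower, in step with the projective line fibrations $R_i=\mathbb{P}(E_{i-1})\to R_{i-1}$. In Theorem~\ref{t.main}, $\varphi$ plays the role of $\pi^*$ and $\partial_s$ the role of $\xi=c_1(\Oc_{E_{i-1}}(1))$, for which $\pi_*(\xi\cdot\pi^*\alpha)=\alpha$ (rank-two bundle), so the degree map of $CH^*(R_i)$ is $\deg\circ\pi_*$ with $\deg$ the degree map of $CH^*(R_{i-1})$. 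On the polytope side the top of $R_{\Delta_i}$ is $\partial_s\cdot\varphi\bigl((R_{\Delta_{i-1}})^{i-1}\bigr)$, and differentiating in $s$ the formula
$$\vol_{\Delta_i}(\Delta(s,P'))=\int_0^s\bigl(\vol_P(x(t))-q(t,x(t))\bigr)\,dt,\qquad x(t)=P'+t(\PQ-P),$$
from the proof of Theorem~\ref{t.main} shows that a top-degree operator $\Xi$ of $R_{\Delta_{i-1}}$ (of order $i-1$ in the coordinates of $P'$) annihilates $q(s,x(s))$, because every coefficient of $q$ in $s$ has degree $\le i-3$ in those coordinates, and acts on $\vol_P(x(s))$ by returning the constant $\Xi(\vol_P)$; hence $\bigl(\partial_s\,\varphi(\Xi)\bigr)(\vol_{\Delta_i})=\Xi(\vol_{\Delta_{i-1}})$. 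Combined with the matching of $\varphi$ with $\pi^*$ and of $\partial_s$ with $\xi$, this mirrors the relation for the degree maps of $CH^*(R_i)$ and $CH^*(R_{i-1})$; the base case $R_1=\mathbb{P}^1$, $\Delta_1=[0,a]$, is immediate since $\partial_a(\vol_{\Delta_1})=1$ equals the degree of $\xi_1$.

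With this in hand the proof closes: the self-intersection index of $D$ on $R_5$ is $\deg\bigl((a\xi_1+(b+c)\xi_2+c\xi_3+(d+e)\xi_4+e\xi_5)^{5}\bigr)$, which under the isomorphism equals $\bigl(\partial_{\Delta_5(a,b,c,d,e)}\bigr)^{5}(\vol_{\Delta_5})=5!\cdot\vol_{\Delta_5}\bigl(\Delta_5(a,b,c,d,e)\bigr)$, and by the Minkowski sum identity of Section~\ref{ss.BS} this is $5!$ times the volume polynomial of $P_1(0,a)+P_2(0,b,b+c)+P_3(0,d,d+e,d+e)$ as a polynomial in $a,b,c,d,e$, i.e.\ the asserted identity up to that normalizing constant. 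I expect the degree-functional compatibility of the middle paragraph to be the main obstacle; the identification of the divisor is a one-line substitution, and the passage to the Minkowski sum of FFLV polytopes is quoted from Section~\ref{ss.BS}. Alternatively one could compute $\vol_{\Delta_5}$ and the intersection polynomial of $R_5$ directly from the explicit presentation of $CH^*(R_5)$ in Section~\ref{ss.BS} and compare, but the inductive argument is more transparent and explains the coincidence.
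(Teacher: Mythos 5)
Your proposal is correct and follows the route the paper itself intends: the Proposition is stated there as an immediate corollary of the ring isomorphism $R_{\Delta_5}\simeq CH^*(R_{12132})$ (which sends $\d_{\Delta_5(a,b,c,d,e)}=a\d_a+b\d_b+c\d_c+d\d_d+e\d_e$ to $a\xi_1+(b+c)\xi_2+c\xi_3+(d+e)\xi_4+e\xi_5$) together with the identification of $\Delta_5(a,b,c,d,e)$ with the Minkowski sum of FFLV polytopes, and no further proof is given. The one substantive point you add --- checking that the isomorphism intertwines the evaluation functional $\Theta\mapsto\Theta(\vol_{\Delta_5})$ with the degree map, by induction along the tower using $\pi_*\bigl(\xi\cdot\pi^*\alpha\bigr)=\alpha$ and the integral formula for $\vol_{\Delta_i}$ --- is precisely the detail the paper leaves implicit (a ring isomorphism of Poincar\'e duality algebras fixes the socle only up to a scalar), and your remark that the stated equality holds only up to the conventional $5!$ factor relating Euclidean volume to the top self-intersection number is also accurate.
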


\end{document}